\numberwithin{equation}{section}
\theoremstyle{plain}
\newtheorem{thm}{Theorem}[section]
\newtheorem{lem}[thm]{Lemma}
\newtheorem{cor}[thm]{Corollary}
\newtheorem{prop}[thm]{Proposition}
\newtheorem{conj}[thm]{Conjecture}
 \theoremstyle{definition}
\newtheorem{defn}[thm]{Definition}
\newtheorem{rem}[thm]{Remark}
\newtheorem{notn}[thm]{Notation}
\newtheorem{ques}[thm]{Question}
\newcommand{\mb}[1]{\mathbb{#1}}
\newcommand{\vphi}{\varphi}
\begin{document}
\title{Toda primes}

\author{Stephen McKean}
\address{Department of Mathematics \\ Brigham Young University} 
\email{mckean@math.byu.edu}
\urladdr{shmckean.github.io}

\subjclass[2020]{Primary: 11A41. Secondary: 11B68, 55Q40.}

\begin{abstract}
A Toda prime of an integer $n$ is an odd prime $p$ such that $4n=(p-1)k$ with $k$ coprime to $p$. We conjecture that every positive integer admits at least two Toda primes. We give a partial proof that every positive integer admits at least one Toda prime. We conclude by discussing connections to denominators of Bernoulli numbers and a generalization of Sophie Germain primes.
\end{abstract}

\maketitle

\section{Introduction}
The fourth stable homotopy group of spheres is trivial, meaning that $\pi_{n+4}(S^n)=0$ for all $n>5$. In contrast to this, it is a theorem that $S^m$ has no trivial higher homotopy groups when $m\in\{2,3,4,5\}$, as we will briefly explain.

Curtis proved that $\pi_n(S^4)\neq 0$ for all $n\geq 4$ \cite{Cur69}. Curtis also proved that $\pi_n(S^2)\neq 0$ for all $n\not\equiv 1\mod 8$. These same results were obtained (via different methods) by Mimura, Mori, and Oda \cite{MMO75}. The proof that $\pi_n(S^5)\neq 0$ for all $n\geq 5$ was given by Mori \cite{Mor75} and Mahowald \cite{Mah75,Mah82}.

Since $\pi_n(S^2)\cong\pi_n(S^3)$ for all $n\geq 3$, the remaining case was $\pi_n(S^3)$ with $n\equiv 1\mod 8$. This last case was proved by Gray \cite{Gra84}, and later by Ivanov, Mikhailov, and Wu \cite{IMW16} using different methods. In \emph{op.~cit.}, the authors note that \cite[Theorem 5.2(ii)]{Tod66} implies that
\[\mb{Z}/p\subseteq\pi_{2(p-1)k+1}(S^3)\]
whenever $\gcd(p,k)=1$ \cite[p.~342, Equation (B)]{IMW16}. It follows that if every positive integer $n$ admits an odd prime $p$ and an integer $k$ such that $\gcd(p,k)=1$ and $4n=(p-1)k$, then $\pi_n(S^3)\neq 0$ for all $n\equiv 1\mod 8$. This leads one to the following definition.

\begin{defn}
    Let $n$ be an integer. A \emph{Toda prime} of $n$ is an odd prime $p$ such that $p-1\mid 4n$ and $\gcd(p,\frac{4n}{p-1})=1$. Denote the set of Toda primes of $n$ by $T(n)$ (see \cref{table:toda sets}), and let $t(n):=|T(n)|$ (see \cref{fig:tn}).
\end{defn}

\begin{table}[t]
\centering
\caption{Sets of Toda primes}\label{table:toda sets}
\begin{tabular}{|r|l|}
\hline
$n$ & $T(n)$\\
\hline
1 & $3,5$\\
2 & $3,5$\\
3 & $5,7,13$\\
4 & $3,5,17$\\
5 & $3,11$\\
6 & $5,7,13$\\
7 & $3,5,29$\\
8 & $3,5,17$\\
9 & $5,7,13,19,37$\\
10 & $3,11,41$\\
\hline
\end{tabular}
\quad
\begin{tabular}{|r|l|}
\hline
$n$ & $T(n)$\\
\hline
11 & $3,5,23$\\
12 & $5,7,13,17$\\
13 & $3,5,53$\\
14 & $3,5,29$\\
15 & $7,11,13,31,61$\\
16 & $3,5,17$\\
17 & $3,5$\\
18 & $5,7,13,19,37,73$\\
19 & $3,5$\\
20 & $3,11,17,41$\\
\hline
\end{tabular}
\quad
\begin{tabular}{|r|l|}
\hline
$n$ & $T(n)$\\
\hline
21 & $5,13,29,43$\\
22 & $3,5,23,89$\\
23 & $3,5,47$\\
24 & $5,7,13,17,97$\\
25 & $3,11,101$\\
26 & $3,5,53$\\
27 & $5,7,13,19,37,109$\\
28 & $3,5,17,29,113$\\
29 & $3,5,59$\\
30 & $7,11,13,31,41,61$\\
\hline
\end{tabular}
\end{table}

If every positive integer has a Toda prime, then one can greatly simplify the proof that $\pi_n(S^3)\neq 0$ for $n\geq 3$. This was asked on MathOverflow \cite{MO} (and attributed to Roman Mikhailov) several years ago.

\begin{ques}[Mikhailov]\label{ques:at least one}
    Does every positive integer have a Toda prime?
\end{ques}

In fact, it appears that every positive integer has at least two Toda primes.

\begin{conj}\label{conj:at least two}
    If $n$ is a positive integer, then $t(n)\geq 2$.
\end{conj}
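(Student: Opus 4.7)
The plan starts by simplifying the Toda prime condition. Set $k := 4n/(p-1)$. Since $p$ is odd and coprime to both $p-1$ and $4$, the $p$-adic valuations give $v_p(k) = v_p(4n) - v_p(p-1) = v_p(n)$, so $\gcd(p, k) = 1$ if and only if $p \nmid n$. Therefore
\[ T(n) = \{p \text{ an odd prime} : p-1 \mid 4n \text{ and } p \nmid n\}. \]
In this form, $3 \in T(n)$ iff $3 \nmid n$ and $5 \in T(n)$ iff $5 \nmid n$, so the conjecture holds immediately when $\gcd(n, 15) = 1$.

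From here I would proceed by a case analysis on which small primes divide $n$. If $3 \mid n$ and $5 \nmid n$, then $5 \in T(n)$, and both $p = 7$ and $p = 13$ automatically satisfy $p - 1 \mid 4n$; it suffices that at least one does not divide $n$. If both $7$ and $13$ divide $n$, then $273 \mid n$, and I would escalate to candidates such as $p = 19, 37, 73$ (which require $9 \mid n$, $9 \mid n$, and $18 \mid n$ respectively), or to new primes of the form $d+1$ with $d \mid 4n$ produced by the new prime factors of $n$. Parallel escalations handle the cases $5 \mid n$, $3 \nmid n$ (using $p = 11$, then $p = 41$, and so on) and $15 \mid n$ (using $p \in \{7, 11, 13\}$ simultaneously). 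At each step, the failure of a candidate forces yet another small prime to divide $n$, which in turn introduces new divisors of $4n$ and hence new candidate primes.

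The main obstacle is that this case tree has no a priori termination. The supply of candidates we can provably produce is tied to the arithmetic of $4n$, and for an adversarially chosen $n$ divisible by every candidate in our list, there is no elementary way to guarantee two additional Toda primes. Concretely, $p = 2n+1$ is always a Toda prime whenever it is an odd prime (since $p - 1 = 2n \mid 4n$ and $p > n$), so proving $t(n) \geq 2$ unconditionally is entangled with Sophie Germain-type questions on primes in short arithmetic progressions. I expect a complete proof of \cref{conj:at least two} to require either a covering-system-style argument that rules out pathological $n$ simultaneously, or analytic input such as Linnik-type lower bounds on the least prime in arithmetic progressions of modulus dividing $4n$, sufficient to guarantee two good $p$ for every $n$. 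My attack would begin with the elementary case analysis to extract as much as possible, then turn to analytic tools for the residual cases.
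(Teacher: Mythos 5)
This statement is \cref{conj:at least two}, which the paper does not prove; it is supported only by computation and by the conditional partial result \cref{thm:main}. So there is no proof in the paper against which your attempt can be matched, and your proposal --- as you yourself acknowledge --- is not a proof either: it is a correct reduction followed by a case analysis with no termination argument. Your opening simplification is right and agrees with the paper: since $v_p(4n/(p-1))=v_p(n)$ for odd $p$, one has $\gcd(p,4n/(p-1))=1$ iff $p\nmid n$, which is \cref{lem:prime factors not toda} together with the characterization in Section 4 that $T(n)$ consists of the primes among $\{2d+1:d\mid 2n\}$ not dividing $n$. Your observations that $3\in T(n)$ iff $3\nmid n$ and $5\in T(n)$ iff $5\nmid n$ reproduce \cref{lem:not 5 or not 3 and 5}, which already settles $t(n)\geq 2$ when $\gcd(n,15)=1$ and $t(n)\geq 1$ when $5\nmid n$.

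The genuine gap is everything after that, and you have diagnosed it accurately but not closed it. The paper's own strategy for the case $3\mid n$ is not an open-ended escalation through candidate primes but an induction on $\omega(n)$ (\cref{lem:t(n)=3}, yielding \cref{prop:divisible by 3}): after reducing to odd square-free multiples of $3$, one shows that any $n$ with $t(n)=3$ must have $T(n)=\{5,7,13\}$, and the inductive step funnels every potential counterexample into the very specific configuration isolated in \cref{conj:main} --- which is then \emph{assumed}, not proved. Even with that assumption, the paper only reaches $t(n)\geq 3$ for $3\mid n$ and $t(n)\geq 2$ for $5\nmid n$; the residual case $5\mid n$, $3\nmid n$ (equivalently, showing $t(5a)\geq 2$ for odd square-free $a$ coprime to $15$) is exactly the one the paper flags as ``much harder,'' because unlike the $3\mid n$ case there are multiples of $5$ with distinct two-element Toda sets (e.g.\ $T(5)=\{3,11\}$ and $T(55)=\{3,23\}$), so no single target set can anchor the induction. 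The paper also notes that proving \cref{conj:general bernoulli}~(ii) for $a=3$ and $a=5$ would resolve \cref{conj:at least two}. In short: your reduction is sound and your assessment of the difficulty is consistent with the paper's, but neither your proposal nor the paper contains a proof of this statement, and your escalation-through-candidates plan lacks the structural induction that makes the paper's conditional partial result go through.
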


We tried to answer \cref{ques:at least one} in the affirmative, but our approach hits a snag. To turn our failed attempt into a theorem, we adopt the time-tested tradition of stating our snag as a conjecture (\cref{conj:main}). We will give some heuristic evidence for this conjecture in \cref{sec:heuristic}.

\begin{conj}\label{conj:main}
    Let $n$ be an odd, square-free multiple of 3. Assume that there exists $p\in\{5,7,13\}$ such that $p\mid n$, and that $r\nmid n$ for $r\in\{5,7,13\}-\{p\}$. Finally, assume there exists $q\in T(3p)-\{5,7,13\}$ such that $q\nmid n$. Then $t(n)\geq 4$.
\end{conj}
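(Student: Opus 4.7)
The plan is to begin with a technical reformulation: $r \in T(n)$ if and only if $r$ is an odd prime satisfying $r - 1 \mid 4n$ and $r \nmid n$. This follows from the definition of Toda primes because $\gcd(r, 4n/(r-1)) = 1$ is equivalent to $r \nmid 4n$ whenever $r$ is an odd prime (using $r \nmid r - 1$ and $r \nmid 4$). An immediate consequence is a ``pullback'' lemma: whenever $d \mid n$ and $r \in T(d)$ with $r \nmid n$, we have $r \in T(n)$.

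With this in hand, the first step produces three Toda primes of $n$ from $T(3p)$. For each $r \in \{5, 7, 13\} \setminus \{p\}$, the value $r - 1 \in \{4, 6, 12\}$ divides $12p = 4 \cdot 3p$, so $r \in T(3p)$; since none of these $r$ divide $n$ by hypothesis, the pullback lemma yields $\{5, 7, 13\} \setminus \{p\} \subseteq T(n)$. The given $q \in T(3p) \setminus \{5, 7, 13\}$ with $q \nmid n$ contributes a third. This establishes $t(n) \geq 3$ unconditionally.

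The second step handles the fourth prime by a case analysis on $m := n/(3p)$. If $m = 1$, then $n = 3p$ and one verifies directly that $|T(15)| = 5$, $|T(21)| = 4$, and $|T(39)| = 5$, each at least $4$. If $m > 1$, pick any prime divisor $\ell$ of $m$. Since $\ell$ is coprime to $12p$, every element of $\{k\ell + 1 : k \in \{2, 4, 6, 12, 2p, 4p, 6p, 12p\}\}$ satisfies $r - 1 \mid 4n$, and further divisors $d \mid 4n$ supported on multiple prime factors of $m$ supply still more candidates. The aim is to exhibit at least one such $r$ that is prime, coprime to $n$, and distinct from the three primes already found.

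The principal obstacle is this fourth-prime existence statement, which is the snag the paper alludes to. Producing a prime in a prescribed finite union of arithmetic progressions indexed by divisors of $4n$ is, in general, an instance of Dickson / Bateman--Horn and is open. The candidate set $\{d + 1 : d \mid 4n\}$ has size $3 \cdot 2^{\omega(n)}$, where $\omega(n)$ counts the distinct prime factors of $n$, so the heuristic count of Toda primes grows rapidly in $\omega(n)$ and the worst cases are those with only two or three prime factors. My proposed attack is hybrid: use Linnik's theorem or Siegel--Walfisz to guarantee a prime among $\{d + 1 : d \mid 4n\}$ once $n$ is sufficiently large, and dispatch the remaining finite cases by direct computation. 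Formalizing this unconditional fourth-prime bound for all $n$ in the hypothesis class is precisely where the difficulty concentrates.
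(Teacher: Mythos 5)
The statement you are trying to prove is \cref{conj:main}, which the paper itself leaves as a conjecture: there is no proof in the paper to compare against, only the heuristic discussion in \cref{sec:heuristic}. Your opening moves are correct and match what the paper already observes there. The reformulation that an odd prime $r$ lies in $T(n)$ if and only if $r-1\mid 4n$ and $r\nmid n$ is right (it is the content of \cref{lem:prime factors not toda} together with the fact that $\gcd(r,m)=1$ iff $r\nmid m$ for $r$ prime), your pullback lemma is \cref{lem:subset}/\cref{cor:multiply by prime}, and the conclusion $\{5,7,13,q\}-\{p\}\subseteq T(n)$, hence $t(n)\geq 3$, is stated verbatim at the start of \cref{sec:heuristic}. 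So everything up to ``$t(n)\geq 3$ unconditionally'' is fine but is not the content of the conjecture.

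The genuine gap is the fourth prime, and your proposed route to it does not work. Linnik's theorem and Siegel--Walfisz concern primes in an \emph{infinite} arithmetic progression $\{a+kq:k\geq 0\}$; here you must certify that at least one member of the \emph{finite} set $\{d+1:d\mid 4n\}$ (beyond the three primes already in hand and avoiding $\Omega(n)$) is prime. For $n=3p\ell$ with $\ell$ a single large prime, this set has only $24$ elements, the large candidates being $\{2\ell+1,4\ell+1,6\ell+1,12\ell+1,2p\ell+1,\ldots,12p\ell+1\}$, and no known analytic result guarantees that any specific such value is prime as $\ell\to\infty$; this is a Dickson/Bateman--Horn-type question, as you yourself note one sentence earlier. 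There is consequently no ``sufficiently large $n$'' threshold beyond which the claim becomes provable, so the finite-check half of your hybrid strategy has nothing to attach to. The paper makes the same density heuristic you do (comparing $2^{\omega(n)+1}$ candidates against $\omega(n)+3$ exclusions and the density $1/\log(4n+1)$) and deliberately stops at a conjecture; your proposal reproduces the heuristic but does not close it, and the specific tools you name cannot close it.
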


\begin{thm}\label{thm:main}
    Assume \cref{conj:main}. If $n$ is a positive integer, then $t(n)\geq 1$. If $5\nmid n$, then $t(n)\geq 2$. If $3\mid n$, then $t(n)\geq 3$.
\end{thm}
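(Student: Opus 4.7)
The plan is to simplify the Toda condition, reduce to odd square-free $n$, and then case-split on which of $\{5,7,13\}$ divide $n$, invoking \cref{conj:main} on $n$ or on a suitable quotient.

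For any odd prime $p$, $\gcd(p,4(p-1))=1$, whence $v_p(4n/(p-1))=v_p(n)$, and the Toda condition collapses to $p\nmid n$. So $T(n)=\{p\text{ odd prime}:p-1\mid 4n,\ p\nmid n\}$. Let $n_0$ be the product of the distinct odd prime divisors of $n$. Then $T(n_0)\subseteq T(n)$, since $p-1\mid 4n_0$ forces $p-1\mid 4n$ and, for odd $p$, $p\nmid n_0$ is equivalent to $p\nmid n$. All hypotheses of the theorem transfer to $n_0$, so I would assume henceforth that $n$ is odd and square-free.

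Parts (a) and (b) reduce instantly to part (c) plus the observations that $3\in T(n)$ when $3\nmid n$ and $5\in T(n)$ when $5\nmid n$. For part (c), fix $n$ odd square-free with $3\mid n$, and set $A=\{p\in\{5,7,13\}:p\mid n\}$; the inclusion $\{5,7,13\}-A\subseteq T(n)$ already contributes $3-|A|$ Toda primes. If $|A|=0$ we are done. If $|A|=1$ with $A=\{p\}$, then any $q\in T(3p)-\{5,7,13\}$ with $q\nmid n$ is a third Toda prime, since $3p\mid n$ forces $q-1\mid 12p\mid 4n$; here \cref{conj:main} is in fact automatic and gives $t(n)\geq 4$. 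If every such $q$ divides $n$, then $n$ is divisible by $3$, $p$, and all of $T(3p)-\{5,7,13\}$, and I would exhibit a third Toda prime directly from the divisors of $4n$---for instance, when $p=5$ we have $11\mid n$, so $23=22+1$ is a Toda prime of $n$ unless $23\mid n$, in which case one proceeds to the next candidate $67=66+1$, and so on. If $|A|=2$ with $A=\{p_1,p_2\}$, I would apply \cref{conj:main} to $n'=n/p_2$, which is odd square-free, divisible by $3$, with $A(n')=\{p_1\}$; when the conjecture applies (its existential clause for $n'$ being equivalent to the one for $n$), $t(n')\geq 4$ and $T(n)\supseteq T(n')-\{p_2\}$ yield $t(n)\geq 3$.

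The main obstacle is $|A|=3$: the analogous reduction to $n''=n/(7\cdot 13)$ with $A(n'')=\{5\}$ only provides $t(n)\geq t(n'')-2\geq 2$. The missing Toda prime must be extracted directly from the inclusion $T(1365)-\{p:p\mid n\}\subseteq T(n)$; $T(1365)$ contains at least seventeen primes outside $\{3,5,7,13\}$ (namely $11,29,31,43,53,61,71,79,131,157,211,\dots$), and I would argue that either one of them fails to divide $n$, supplying the third Toda prime, or else $n$ is so divisible by small primes that the enlarged divisor set of $4n$ produces still more Toda primes of the form $r=d+1$ with $r\equiv 1\pmod 7$ or $r\equiv 1\pmod{13}$ (such as $29,43,79,157$). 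Parallel bookkeeping handles the pathological subcases of $|A|=1,2$ in which \cref{conj:main}'s existence clause fails, and this explicit enumeration is the genuinely delicate step of the proof.
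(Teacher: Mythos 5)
Your preliminary reductions are sound and agree with the paper's: the observation that the coprimality condition collapses to $p\nmid n$, the reduction to odd square-free $n$, and the derivation of the first two claims from the third via $3\in T(n)$ when $3\nmid n$ and $5\in T(n)$ when $5\nmid n$ all match the lemmas of Sections 2 and 3. The case $|A|=0$ and the favourable halves of $|A|=1,2$ are also fine, though note that in the $|A|=1$ case \cref{conj:main} is not ``automatic'': its conclusion $t(n)\geq 4$ is precisely what is conjectural, and only $t(n)\geq 3$ is automatic there (which happens to suffice for that subcase).

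The genuine gap is in every subcase where the auxiliary primes you want all happen to divide $n$: when $|A|=1$ and every $q\in T(3p)-\{5,7,13\}$ divides $n$, when $|A|=2$ and the existential clause of \cref{conj:main} fails for $n'$, and in all of $|A|=3$. There you fall back on ``proceed to the next candidate \dots and so on'' or ``explicit enumeration,'' but you give no reason the search terminates: what is needed is a prime of the form $d+1$ with $d\mid 4n$ that does not divide $n$, and producing such a prime for arbitrary $n$ is essentially the original problem. Crucially, \cref{conj:main} cannot be invoked in these subcases, because its own hypothesis --- the existence of $q\in T(3p)-\{5,7,13\}$ with $q\nmid n$ --- is exactly what fails there. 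The paper avoids this trap with a different global structure: it inducts on $\omega(n)$ with the strengthened hypothesis that $t(a)=3$ forces $T(a)=\{5,7,13\}$ (\cref{lem:t(n)=3}), verifies the base cases $\omega(n)\leq 4$ by computer, and shows that any putative counterexample with $t(n)=3$ must be divisible by exactly one $p_1\in\{5,7,13\}$ and must carry an extra Toda prime $q\in T(3p_1)-\{5,7,13\}$ with $q\nmid n$ --- that is, it manufactures a situation in which the hypotheses of \cref{conj:main} are guaranteed to hold, and the conjecture then yields the contradiction. Your flat case split on $A$ cannot reach those hypotheses in the bad subcases, so the proof as proposed does not close.
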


In \cref{sec:lemmas}, we state and prove a few simple lemmas. We prove \cref{thm:main} in \cref{sec:proof} by inducting on the number of odd prime factors of $n$. Essentially all of the real work happens in \cref{lem:t(n)=3}. We conclude with \cref{sec:observations}, where we pose a couple questions that arose while working on this project.

\subsection*{Acknowledgements}
We thank Nick Andersen, Pace Nielsen, and Kyle Pratt for helpful conversations. The author was partially supported by the NSF (DMS-2502365) and the Simons Foundation.

\section{Some lemmas}\label{sec:lemmas}
There are many elementary statements that one can prove about Toda primes. In this section, we record a few lemmas and corollaries that we will need en route to \cref{thm:main}.

\begin{notn}
    Given a positive integer $n$, let $\Omega(n)$ denote the set of prime divisors of $n$. Later, we will use the notation $\omega(n):=|\Omega(n)|$.
\end{notn}

\begin{lem}\label{lem:prime factors not toda}
    If $n$ is a positive integer, then $T(n)\cap\Omega(n)=\varnothing$.
\end{lem}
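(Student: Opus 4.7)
The plan is to proceed by contradiction: suppose some odd prime $p$ lies in both $T(n)$ and $\Omega(n)$, and derive that $p$ divides $\tfrac{4n}{p-1}$, contradicting the coprimality requirement in the definition of a Toda prime.

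Concretely, I would start from the assumption $p \in \Omega(n)$ and write $n = pm$ for some positive integer $m$. Since $p \in T(n)$, the condition $p-1 \mid 4n$ gives $p-1 \mid 4pm$. The main observation is that $\gcd(p-1,p)=1$ (because $p$ is prime), so $p-1$ must already divide $4m$. Hence the quotient $\tfrac{4n}{p-1} = p \cdot \tfrac{4m}{p-1}$ is an honest integer and is visibly a multiple of $p$.

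This forces $\gcd\!\bigl(p, \tfrac{4n}{p-1}\bigr) \geq p > 1$, contradicting the second half of the definition of a Toda prime. Therefore no prime factor of $n$ can be a Toda prime of $n$, and $T(n)\cap\Omega(n)=\varnothing$.

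There is no real obstacle here; the only mild subtlety is invoking $\gcd(p-1,p)=1$ to transfer the divisibility from $4pm$ to $4m$, which is immediate.
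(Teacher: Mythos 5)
Your proof is correct and follows essentially the same route as the paper's: both arguments show that $p$ must divide the cofactor $\tfrac{4n}{p-1}$, using the coprimality of $p$ and $p-1$ (the paper phrases this as Euclid's lemma applied to $p \mid (p-1)k$, while you factor $n=pm$ and exhibit the factor of $p$ explicitly). No issues.
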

\begin{proof}
    Let $p$ be an odd prime factor of $n$. If $p-1$ divides $4n$, then we have $4n=(p-1)k$ for some $k$. In particular, $p\mid(p-1)k$, so $p\mid k$ by Euclid's lemma. It follows that $\gcd(p,k)\neq 1$, so $p\not\in T(n)$.
\end{proof}

\begin{lem}\label{lem:subset}
    Let $a,n$ be positive integers. Then
    \[T(an)\supseteq T(a)\cup T(n)-\Omega(an).\]
\end{lem}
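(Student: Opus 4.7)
The plan is to chase definitions. Let $p \in (T(a) \cup T(n)) - \Omega(an)$. By symmetry between $a$ and $n$, I may assume $p \in T(n)$; then $p$ is an odd prime, $p - 1 \mid 4n$, and $\gcd(p, k) = 1$, where $k := 4n/(p-1)$.

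To show $p \in T(an)$, first note that $p - 1 \mid 4n$ already gives $p - 1 \mid 4an$, with quotient $4an/(p-1) = ak$. It remains to verify $\gcd(p, ak) = 1$. Since $p$ is prime, this reduces to checking that $p \nmid a$ and $p \nmid k$. The latter is the Toda hypothesis on $p$ relative to $n$. For the former, $p \notin \Omega(an)$ means $p$ does not divide $an$, and in particular $p \nmid a$.

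The argument when $p \in T(a)$ is identical after swapping the roles of $a$ and $n$, so taking the union of the two cases gives the desired inclusion. There is essentially no obstacle here; the lemma is bookkeeping. The role of subtracting $\Omega(an)$ is precisely to rule out the possibility that multiplying $k$ by $a$ (or multiplying $4a/(p-1)$ by $n$ in the symmetric case) introduces a factor of $p$ that would destroy coprimality of $p$ with the new quotient, which is the phenomenon encoded by \cref{lem:prime factors not toda}.
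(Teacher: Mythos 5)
Your proof is correct and follows the same route as the paper: reduce to one of the two symmetric cases, observe the new quotient is $ak$ (resp.\ $nk$), and use primality of $p$ together with $p\notin\Omega(an)$ to keep the quotient coprime to $p$. The paper's version is identical in substance (it writes the $p\in T(a)$ case and uses Euclid's lemma where you split $\gcd(p,ak)=1$ into $p\nmid a$ and $p\nmid k$).
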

\begin{proof}
    Suppose that $p\in T(a)\cup T(n)-\Omega(an)$. Since $p\in T(a)$, we have $4a=(p-1)k$ for some $k$ coprime to $p$. Thus $4an=(p-1)kn$, and since $p\not\in\Omega(an)$, we have $kn$ coprime to $p$ by Euclid's lemma. Thus $p\in T(an)$.
\end{proof}

\begin{cor}\label{cor:multiply by prime}
    If $p$ is prime, then $T(pn)\supseteq T(n)-\{p\}$.
\end{cor}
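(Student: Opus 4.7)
The plan is to derive this corollary as an immediate consequence of the two preceding results, Lemma~\ref{lem:prime factors not toda} and Lemma~\ref{lem:subset}. First I would apply Lemma~\ref{lem:subset} in the special case $a=p$, obtaining
\[T(pn)\supseteq (T(p)\cup T(n))-\Omega(pn)\supseteq T(n)-\Omega(pn).\]
So it suffices to upgrade $\Omega(pn)$ on the right-hand side to the (potentially much smaller) set $\{p\}$.

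Next I would observe that $\Omega(pn)=\Omega(n)\cup\{p\}$, hence
\[T(n)-\Omega(pn)=T(n)-(\Omega(n)\cup\{p\})=(T(n)-\Omega(n))-\{p\}.\]
By Lemma~\ref{lem:prime factors not toda} we have $T(n)\cap\Omega(n)=\varnothing$, so $T(n)-\Omega(n)=T(n)$, and the displayed set simplifies to $T(n)-\{p\}$. Combining this with the containment from Lemma~\ref{lem:subset} yields $T(pn)\supseteq T(n)-\{p\}$.

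There isn't really an obstacle here; the only minor subtlety is recognizing that the primes of $n$ were already excluded from $T(n)$ by Lemma~\ref{lem:prime factors not toda}, so removing $\Omega(pn)$ from $T(n)$ is no stronger than removing the single new prime $p$. The corollary is thus essentially a reformulation of Lemma~\ref{lem:subset} that will be convenient later when multiplying $n$ by a single prime at a time.
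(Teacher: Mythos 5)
Your proposal is correct and matches the paper's own argument essentially verbatim: both apply Lemma~\ref{lem:subset} with $a=p$, note that $\Omega(pn)=\Omega(n)\cup\{p\}$, and use Lemma~\ref{lem:prime factors not toda} to conclude that removing $\Omega(pn)$ from $T(n)$ only removes $p$. No differences worth noting.
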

\begin{proof}
    Note that $\Omega(pn)=\Omega(n)\cup\{p\}$. We have $T(n)\cap\Omega(n)=\varnothing$ by \cref{lem:prime factors not toda}, so $T(n)-\Omega(pn)=T(n)-\{p\}$. The claim thus follows from \cref{lem:subset}.
\end{proof}

\begin{cor}\label{cor:multiply by divisor}
    If $a\mid n$, then $T(an)\supseteq T(n)$.
\end{cor}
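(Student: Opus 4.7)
The plan is to reduce this to the already-established \cref{lem:subset} by observing that the divisibility hypothesis $a\mid n$ makes the set difference appearing there vacuous, i.e.\ equal to $T(n)$ itself.

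First I would apply \cref{lem:subset} to obtain the inclusion
\[T(an)\supseteq T(a)\cup T(n)-\Omega(an)\supseteq T(n)-\Omega(an).\]
Next, since $a\mid n$, every prime divisor of $a$ is also a prime divisor of $n$, so $\Omega(a)\subseteq\Omega(n)$, and therefore $\Omega(an)=\Omega(a)\cup\Omega(n)=\Omega(n)$. Finally, \cref{lem:prime factors not toda} gives $T(n)\cap\Omega(n)=\varnothing$, so $T(n)-\Omega(an)=T(n)-\Omega(n)=T(n)$. Combining with the initial inclusion yields $T(an)\supseteq T(n)$.

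There is no real obstacle here; the statement is essentially bookkeeping of prime sets, and \cref{lem:subset} together with \cref{lem:prime factors not toda} does all the work. The only content is the trivial but crucial observation that multiplying by a divisor of $n$ does not enlarge the prime support $\Omega(n)$, so the ``$-\Omega(an)$'' obstruction from \cref{lem:subset} never bites on primes that were already Toda primes of $n$.
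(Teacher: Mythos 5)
Your proof is correct and follows exactly the paper's own route: apply \cref{lem:subset}, note that $a\mid n$ forces $\Omega(an)=\Omega(n)$, and then use \cref{lem:prime factors not toda} to conclude $T(n)-\Omega(n)=T(n)$. Nothing to add.
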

\begin{proof}
    Since $a\mid n$, we have $\Omega(an)=\Omega(n)$. We have seen that $T(n)\cap\Omega(n)=\varnothing$ (\cref{lem:prime factors not toda}), so $T(an)\supseteq T(n)-\Omega(n)=T(n)$ by \cref{lem:subset}.
\end{proof}

\begin{cor}\label{cor:odd}
    If $n$ is a positive integer, then $T(2n)\supseteq T(n)$.
\end{cor}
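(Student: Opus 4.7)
The plan is to reduce this immediately to \cref{cor:multiply by prime} with $p=2$. Applying that corollary to the prime $2$ gives the containment $T(2n) \supseteq T(n) - \{2\}$. So all that remains is to argue that removing $2$ from $T(n)$ does nothing.

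This is built into the definition: a Toda prime is required to be an \emph{odd} prime, so $2 \notin T(n)$ for any $n$. Hence $T(n) - \{2\} = T(n)$, and the desired inclusion follows.

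If for some reason one preferred not to invoke \cref{cor:multiply by prime}, an equally short direct argument is available. Given $p \in T(n)$, write $4n = (p-1)k$ with $\gcd(p,k) = 1$. Then $4(2n) = (p-1)(2k)$, and since $p$ is odd we have $\gcd(p, 2) = 1$, so $\gcd(p, 2k) = 1$, giving $p \in T(2n)$. There is no real obstacle here; the whole content of the corollary is that doubling does not introduce any odd prime divisors that could collide with a candidate Toda prime.
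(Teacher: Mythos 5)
Your proof is correct and follows exactly the paper's route: apply \cref{cor:multiply by prime} with $p=2$ and note that $2\notin T(n)$ since Toda primes are odd by definition. The optional direct argument you append is also fine, but the main line is identical to the paper's.
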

\begin{proof}
    Toda primes are odd by definition, so this follows from \cref{cor:multiply by prime}.
\end{proof}

\section{Hunting for a Toda prime}\label{sec:proof}
To begin our proof of \cref{thm:main}, we will use the following lemma to reduce to the case of $3\mid n$.

\begin{lem}\label{lem:not 5 or not 3 and 5}
    Let $p\in\{3,5\}$. If $p\nmid n$, then $p\in T(n)$. In particular:
    \begin{itemize}
        \item If $5\nmid n$, then $t(n)\geq 1$.
        \item If $3,5\nmid n$, then $t(n)\geq 2$.
    \end{itemize}
\end{lem}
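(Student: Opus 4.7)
The plan is to verify directly from the definition of a Toda prime that $3$ and $5$ each lie in $T(n)$ whenever they do not divide $n$. The two ``In particular'' statements then follow immediately by counting.

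First I would handle $p=3$. Here $p-1=2$ always divides $4n$, with quotient $4n/(p-1)=2n$. The coprimality condition becomes $\gcd(3,2n)=1$. Since $\gcd(3,2)=1$, this reduces to $\gcd(3,n)=1$, which is exactly the hypothesis $3\nmid n$. Thus $3\in T(n)$.

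Next, for $p=5$: here $p-1=4$ always divides $4n$, with quotient $4n/(p-1)=n$. The coprimality condition becomes $\gcd(5,n)=1$, which is precisely the hypothesis $5\nmid n$. Thus $5\in T(n)$.

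Finally, if $5\nmid n$, then $5\in T(n)$, giving $t(n)\geq 1$; and if both $3\nmid n$ and $5\nmid n$, then $\{3,5\}\subseteq T(n)$, giving $t(n)\geq 2$. There is no real obstacle here—the argument is simply unpacking the definition and noting that the choices $p\in\{3,5\}$ are precisely those for which $p-1\in\{2,4\}$ automatically divides $4n$, leaving only the coprimality condition to check.
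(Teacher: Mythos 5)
Your proof is correct and takes essentially the same approach as the paper: a direct verification from the definition that $p-1\mid 4n$ automatically for $p\in\{3,5\}$, with the coprimality condition reducing to $p\nmid n$. The paper merely treats both cases uniformly by observing $\gcd(4n,p)=1$ implies $\gcd(\frac{4n}{p-1},p)=1$, whereas you compute the quotients $2n$ and $n$ explicitly; the content is identical.
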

\begin{proof}
    Let $p\in\{3,5\}$. Then $p-1\mid 4n$, and $\gcd(4n,p)=1$ by assumption. Thus $\gcd(\frac{4n}{p-1},p)=1$, so $p\in T(n)$.
\end{proof}

Next, we characterize the Toda primes of certain multiples of 3 by inducting on $\omega(n)$.

\begin{lem}\label{lem:t(n)=3}
    Assume \cref{conj:main}. Let $n$ be an odd, square-free multiple of 3. Then $t(n)\geq 3$. Moreover, $t(n)=3$ if and only if $T(n)=\{5,7,13\}$.
\end{lem}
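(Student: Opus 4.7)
My plan is strong induction on $\omega(n)$. The base case $\omega(n)=1$ forces $n=3$, and Table~\ref{table:toda sets} gives $T(3)=\{5,7,13\}$, confirming both claims of the lemma for $n=3$.

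For the inductive step, I would factor $n=mp$ for some prime $p\mid n$ with $p\neq 3$, apply the inductive hypothesis to $m$, and use \cref{cor:multiply by prime} to obtain $T(n)\supseteq T(m)-\{p\}$. For the lower bound $t(n)\geq 3$, I would choose $p\notin\{5,7,13\}$ whenever such a prime factor exists, so that $p\notin T(m)$ (in particular when $T(m)=\{5,7,13\}$), giving $T(n)\supseteq T(m)$ with $t(m)\geq 3$. The residual case, in which every non-$3$ prime factor of $n$ lies in $\{5,7,13\}$, restricts $n$ to the short list $\{15,21,39,105,195,273,1365\}$; I would handle these by direct computation using \cref{lem:subset} and Table~\ref{table:toda sets}.

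The substantive content is the ``moreover'' clause, whose nontrivial direction reduces to the sublemma: if some prime in $\{5,7,13\}$ divides $n$, then $t(n)\geq 4$. I would prove this by a second induction on $\omega(n)$, with base case $n\in\{15,21,39\}$ read from Table~\ref{table:toda sets}. When at least two primes of $\{5,7,13\}$ divide $n$, \cref{lem:subset} applied with $a=3p_1p_2\in\{105,195,273\}$ imports at least eight Toda primes via $T(3p_1p_2)$, which is enough to survive any reasonable $\Omega(n)$. When exactly one $p\in\{5,7,13\}$ divides $n$, the first three hypotheses of \cref{conj:main} hold automatically, and provided there exists some $q\in T(3p)-\{5,7,13\}$ not dividing $n$, \cref{conj:main} immediately yields $t(n)\geq 4$.

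The main obstacle is the residual case where exactly one $p\in\{5,7,13\}$ divides $n$ and every prime in $T(3p)-\{5,7,13\}$ already divides $n$, so that \cref{conj:main}'s last hypothesis fails. Here I would let $q_0$ be the largest prime factor of $n$ and use \cref{lem:subset} with $a=3q_0$ to import the Toda primes of $3q_0$ of the form $aq_0+1$ for $a\in\{2,4,6,12\}$; these necessarily exceed $q_0$ and so avoid $\Omega(n)$. Combined with $\{5,7,13\}-\{p\}\subseteq T(n)$, this typically produces four or more Toda primes. The delicate part is handling small $q_0$ (for instance $q_0=43$ when $p=7$, where only the single extra prime $173$ arises this way): there one must apply \cref{lem:subset} with the minimal forced divisor $a=3p\prod_{q\in T(3p)-\{5,7,13\}}q$, compute $T(a)$ directly, and verify that enough of its Toda primes exceed $q_0$ to push $|T(n)|$ above $3$.
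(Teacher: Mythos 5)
Your high-level frame --- induction on $\omega(n)$, plus reducing the ``moreover'' clause to the sublemma that $p\mid n$ for some $p\in\{5,7,13\}$ forces $t(n)\geq 4$ --- is sound and close in spirit to the paper, and your treatment of the case where \cref{conj:main} applies directly is fine. But two of your cases have genuine gaps. First, when at least two of $5,7,13$ divide $n$, importing the eight or nine Toda primes of $3p_1p_2$ via \cref{lem:subset} is not ``enough to survive any reasonable $\Omega(n)$'': the containment is $T(n)\supseteq T(3p_1p_2)-\Omega(n)$, and once $\omega(n)\geq 9$ the remaining prime factors of $n$ can absorb all but three elements of $T(3p_1p_2)$, so this only yields $t(n)\geq 12-\omega(n)$. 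Your second induction does not rescue the case either, since deleting a prime factor and applying \cref{cor:multiply by prime} returns only $t(n)\geq t(n/p')-1\geq 3$. Second, your residual case --- exactly one $p\in\{5,7,13\}$ divides $n$ and every $q\in T(3p)-\{5,7,13\}$ also divides $n$ --- is left as a sketch (``compute \ldots and verify''), and the device of importing Toda primes of $3q_0$ gives no uniform bound; this case is essentially as hard as the statement being proved.

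The paper avoids both problems by proving the ``moreover'' direction by contradiction from $t(n)=3$ and $T(n)\neq\{5,7,13\}$, rather than proving your sublemma head-on. In that setup there is a distinguished extra prime $q\in T(n)-\{5,7,13\}$, and comparing $T(n)$ with the Toda sets of the cofactors $n_j=3\prod_{i\neq j}p_i$ forces $q-1$ to divide $\gcd(4n_2,\ldots,4n_m)=12p_1$, i.e.\ $q\in T(3p_1)-\{5,7,13\}$; and $q\nmid n$ is automatic because $q\in T(n)$ (\cref{lem:prime factors not toda}). Hence the last hypothesis of \cref{conj:main} is always satisfied in the contradiction scenario and your residual case never arises, while the configuration with two primes from $\{5,7,13\}$ dividing $n$ is excluded within that framework via \cref{lem:prime factors not toda}. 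Pinning down the divisibility constraint $q-1\mid 12p_1$ is the idea your proposal is missing. (Separately, the paper's inductive step requires $m\geq 4$, so it verifies base cases through $\omega(n)=4$ computationally, including products $3pq$ and $3pqr$ with $q\in T(3p)$ and $r\in T(3pq)$; this is substantially more than your list $\{15,21,39,105,195,273,1365\}$ and cannot be read off \cref{table:toda sets} alone.)
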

\begin{proof}
    We will induct on $\omega(n)$. Our base cases will consist of $\omega(n)\leq 4$. Note that if $p\nmid n$ for each $p\in\{5,7,13\}$, then $T(n)\supseteq\{5,7,13\}$. In particular, we may restrict our attention to multiples of these three primes. Moreover, $t(ap)\geq t(a)$ for any prime $p\not\in T(a)$ by \cref{cor:multiply by prime}, so we may assume that every prime factor $p\mid n$ is a Toda prime of some divisor of $n$.
    \begin{itemize}
        \item The case of $\omega(n)=1$ is just the calculation $T(3)=\{5,7,13\}$.
        \item For $\omega(n)=2$, we just need to compute $t(15)=t(39)=5$ and $t(21)=4$.
        \item For $\omega(n)=3$, we first compute $t(3\cdot 5\cdot 7)=9$ and $t(3\cdot 5\cdot 13)=t(3\cdot 7\cdot 13)=8$. It remains to compute, for each $p\in\{5,7,13\}$, the Toda primes of $3pq$ for each $q\in T(3p)$. Using the code provided in \cref{sec:code}, we find that $t(3pq)\geq 4$ for all such $p,q$.
        \item For $\omega(n)=4$, we first compute $t(3\cdot 5\cdot 7\cdot 13)=16$. For the remaining computations in this case, we use the code in \cref{sec:code}.
        \begin{itemize}
            \item If $\{p,q\}\subseteq\{5,7,13\}$ and $r\in T(3pq)$, then $t(3pqr)\geq 9$.
            \item If $p\in\{5,7,13\}$ and $\{q,r\}\subseteq T(3p)$, then $t(3pqr)\geq 7$.
            \item If $p\in\{5,7,13\}$, $q\in T(3p)$, and $r\in T(3pq)$, then $t(3pqr)\geq 5$.
        \end{itemize}
    \end{itemize}
    
    Now let $m\geq 4$. Assume that if $a$ is an odd, square-free multiple of 3 such that $\omega(a)\leq m$, then $t(a)\geq 3$, with $t(a)=3$ if and only if $T(a)=\{5,7,13\}$.
    
    Let $n$ be an odd, square-free multiple of 3 satisfying $\omega(n)=m+1$. Then there are distinct primes $p_1,\ldots,p_m>3$ such that
    \[n=3\cdot\prod_{i=1}^m p_i.\]
    For each $1\leq j\leq m$, let
    \[n_j:=3\cdot\prod_{i\neq j}p_i.\]
    We first prove that $t(n)\geq 3$. By our inductive hypothesis, we have $t(n_j)\geq 3$ for all $j$. If $t(n_j)>3$ for some $j$, then $t(n)\geq t(n_j)-1>2$ by \cref{cor:multiply by prime}. We may thus assume that $t(n_j)=3$ for all $j$. Similarly, if $p_j\not\in T(n_j)$ for some $j$, then $t(n)\geq t(n_j)\geq 3$, so we may assume that $p_j\in T(n_j)$ for all $j$. But since $t(n_j)=3$ for all $j$, we have $T(n_j)=\{5,7,13\}$ by the inductive hypothesis. Thus $p_j\in\{5,7,13\}$ for all $1\leq j\leq 4$, contradicting our assumption that $p_i\neq p_j$ for $i\neq j$. It follows that $t(n)\geq 3$.
    
    It remains to show that $t(n)=3$ if and only if $T(n)=\{5,7,13\}$. Assume that $t(n)=3$. Suppose that $T(n)\neq\{5,7,13\}$. By our inductive hypothesis, we have $t(n_j)\geq 3$ for all $j$, with $t(n_j)=3$ if and only if $T(n_j)=\{5,7,13\}$. \cref{cor:multiply by prime} states that $T(n)\supseteq T(n_j)-\{p_j\}$. Thus if $t(n_j)=3$ and $p_j\not\in\{5,7,3\}$, then $T(n)=\{5,7,13\}$, as desired. We may therefore assume that if $t(n_j)=3$, then $p_j\in\{5,7,13\}$. We thus have at most three $j$ such that $t(n_j)=3$.

    Note that if $n$ is coprime to each of 5, 7, and 13, then $T(n)\supseteq\{5,7,13\}$, so our assumption that $t(n)=3$ implies that $T(n)=\{5,7,13\}$. We may thus assume that there is at least one $j$ such that $p_j\in\{5,7,13\}$. Moreover, if there exist two such factors, say $p_i,p_j\in\{5,7,13\}$, then we have $p_i\in T(n_j)$ and $p_j\in T(n_i)$. But $p_i\mid n_j$ and $p_j\mid n_i$, so this would contradict \cref{lem:prime factors not toda}. Thus $n$ is divisible by exactly one of 5, 7, and 13. We may thus assume $p_1\in\{5,7,13\}$ and $p_i\not\in\{5,7,13\}$ for $i>1$.

    Now $T(n_1)=\{5,7,13\}$, and $T(n)\supseteq\{5,7,13\}-\{p_1\}$. Since $t(n)=3$, there is some prime $q\in T(n)-\{5,7,13\}$. Applying \cref{cor:multiply by prime} again, we have that $t(n)=3\geq t(n_j)-1$, so $t(n_j)\leq 4$ for all $j$. As $T(n)\supseteq T(n_j)-\{p_j\}$, we find that if $t(n_j)=4$, then $T(n_j)=T(n)\cup\{p_j\}$. Thus $q\in T(n_j)$ for all $j>1$, and hence $T(n_j)=\{5,7,13,q,p_j\}-\{p_1\}$.

    By definition, $q-1\mid 4n_j$ for all $j>1$, so $q-1$ divides
    \[\gcd(4n_2,\ldots,4n_m)=12p_1.\]
    That is, $q\in T(3p_1)-\{5,7,13\}$. Together with our previous observation that $p_1\in\{5,7,13\}$ and $p_i\not\in\{5,7,13\}$ for $i>2$, \cref{conj:main} now implies that $t(n)\geq 4$. This contradicts our assumption that $t(n)=3$, and hence we find that $T(n)=\{5,7,13\}$.
    
    The converse consists of counting to three, and we are done.
\end{proof}

As a corollary, we (conditionally) find that if $3\mid n$, then $t(n)\geq 3$.

\begin{cor}\label{prop:divisible by 3}
    Assume \cref{conj:main}. If $3\mid n$, then $t(n)\geq 3$.
\end{cor}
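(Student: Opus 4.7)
The plan is to reduce this corollary to \cref{lem:t(n)=3}, which already handles the case of odd, square-free multiples of $3$. Concretely, I would show that $T(n)\supseteq T(m)$, where $m$ denotes the radical of the odd part of $n$. Since $3\mid n$ implies $3\mid m$, and $m$ is odd and square-free by construction, \cref{lem:t(n)=3} then gives $t(n)\geq t(m)\geq 3$.

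The reduction proceeds in two short steps. First, write $n=2^k n_1$ with $n_1$ odd, and apply \cref{cor:odd} a total of $k$ times to conclude $T(n)\supseteq T(n_1)$. Second, set $m:=\prod_{p\in\Omega(n_1)}p$ and write $n_1=m\cdot a$, where every prime factor of $a$ already lies in $\Omega(m)$. A short induction on the number of prime factors of $a$ counted with multiplicity, peeling them off one at a time and applying \cref{cor:multiply by divisor} at each stage, shows $T(n_1)\supseteq T(m)$. (At each step, the prime being multiplied in already divides the current integer, so the hypothesis $a\mid n$ of \cref{cor:multiply by divisor} is met.) Chaining the two inclusions gives $T(n)\supseteq T(m)$, and we are done by \cref{lem:t(n)=3}.

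I do not expect a real obstacle here: \cref{lem:t(n)=3} is the nontrivial ingredient, and the monotonicity results collected in \cref{sec:lemmas} are tailor-made for exactly this kind of radical reduction. The only minor subtlety is remembering that \cref{cor:multiply by divisor} multiplies by a divisor of $n$ (not an arbitrary integer), so the induction on $a$ must proceed one prime at a time rather than stripping all repeated factors in a single move.
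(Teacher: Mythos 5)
Your proposal is correct and matches the paper's own proof, which performs exactly the same two reductions (to the odd part via \cref{cor:odd} and to the square-free part via \cref{cor:multiply by divisor}) before invoking \cref{lem:t(n)=3}; you merely spell out the prime-at-a-time induction that the paper leaves implicit. No gap.
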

\begin{proof}
    By \cref{cor:multiply by divisor}, we may assume that $n$ is square-free. By \cref{cor:odd}, we may further assume that $n$ is odd. The result now follows from \cref{lem:t(n)=3}.
\end{proof}

Combining \cref{lem:not 5 or not 3 and 5} and \cref{prop:divisible by 3} gives us \cref{thm:main}. In order to go from $t(n)\geq 1$ to $t(n)\geq 2$ (assuming \cref{conj:main}), we would need to prove that $t(5a)\geq 2$ for any odd, square-free $a$ that is coprime to 15. This seems much harder than \cref{prop:divisible by 3}. The qualitative difference between these two cases is that there are multiples of 5 with distinct Toda sets of size 2, such as $T(5)=\{3,11\}$ and $T(55)=\{3,23\}$, while $T(n)=\{5,7,13\}$ whenever $3\mid n$ and $t(n)=3$ (assuming \cref{conj:main}).

\section{Heuristic for \cref{conj:main}}\label{sec:heuristic}
The criteria in \cref{conj:main} imply that $\{5,7,13,q\}-\{p\}\subseteq T(n)$. In particular, $t(n)\geq 3$ for any such $n$. In this section, we will explain why we expect $t(n)\geq 4$ for such $n$.

Firstly, if there exists $q'\in T(3p)-\{5,7,13,q\}$ such that $q'\nmid n$, then $\{5,7,13,q,q'\}-\{p\}\subseteq T(n)$, and we are done. In fact, the Toda primes of $n$ are precisely those primes among $\{2d+1:d\mid 2n\}$ that are not factors of $n$. Thus if
\begin{equation}\label{eq:heuristic}
\{2d+1\text{ prime}:d\mid 2n\}-(\Omega(n)\cup\{5,7,13,q\})
\end{equation}
is non-empty, then $t(n)\geq 4$. Our heuristic for \cref{conj:main} is that the set $\{2d+1:d\mid 2n\}$ consists of $2^{\omega(n)+1}$ elements, while $\Omega(n)\cup\{5,7,13,q\}$ consists of $\omega(n)+3$ elements. 

The discrepancy between the growth of $\{2d+1:d\mid 2n\}$ and $\omega(n)$ holds for any $n$, not just those satisfying the assumptions of \cref{conj:main}. However, computations suggest that there is no bound on $\omega(n)$ among integers with $t(n)\leq 3$ when no further assumptions are placed on $n$. The assumptions of \cref{conj:main} imply that any counterexample would need to be divisible by essentially all Toda primes of its divisors, which provides an upper bound on the size of $n$ for a given $\omega(n)$ and hence a lower bound on the density of primes in the relevant interval. 

Roughly, a non-optimal bound for such $n$ with $\omega(n)=m+1$ is given by setting $n=3p_1\cdots p_m$, with $p_1\in\{5,7,13\}$ and $p_{i+1}=12p_1\cdots p_i+1$ for each $1\leq i\leq m$. One can then calculate $p_{i+1}=12^ip_1+\sum_{j=0}^{i-1}12^j$, and hence $n=3\cdot 12^{\frac{m^2-m}{2}}p_1^m+o(p_1^{m-1})$. The density of primes less than $4n+1$ is bounded below by $\log(4n+1)^{-1}$, which (ignoring lower order terms) is about
\[\left(\frac{m^2-m+2}{2}\log(12)+m\log(p_1)\right)^{-1}.\]
Now, it should be noted that the author is not an analyst, so the above may be rife with error. But it appears that the growth of the number of candidate Toda primes, i.e.~the set $\{2d+1:d\mid 2n\}$, outstrips the growth of $\Omega(n)$ and the dropping density of primes less than the largest Toda candidate, and so we expect \cref{eq:heuristic} to be non-empty.

To conclude this section, we present some computational evidence for \cref{conj:main} in \cref{table:conjecture}. Among the set of $n$ satisfying the criteria of \cref{conj:main}, we calculate the minimal $t(n)$ for a given $\omega(n)$. To introduce notation, let
\[\Upsilon(n)=\min\{t(a):\omega(a)=n\text{ and }a\text{ satisfies the assumptions of \cref{conj:main}}\}.\]
Since $t(ap)\geq t(a)$ for any prime $p\not\in T(a)$ (\cref{cor:multiply by prime}), we may restrict our attention to $n$ such that every prime factor $p\mid n$ is a Toda prime of some divisor of $n$. Some rough code for this computation can be found in \cref{sec:code}. 

\begin{rem}
    In the range we have computed, we have $\Upsilon(n)=a(n+4)-1$, where $a(n)$ is given by \cite[\href{https://oeis.org/A118096}{A118096}]{oeis}. However, this coincidence should not continue, as A118096 has terms satisfying $a(n+1)<a(n)-1$, whereas we expect $\Upsilon(n+1)\geq\Upsilon(n)-1$ for all $n$ in light of \cref{cor:multiply by prime}. 
\end{rem}

\begin{table}[t]
\centering
\caption{Minimal $t(a)$ for \cref{conj:main}}\label{table:conjecture}
\begin{tabular}{|c|c|}
\hline
$n$ & $\Upsilon(n)$\\
\hline
2 & 4\\
3 & 4\\
4 & 5\\
5 & 7\\
6 & 7\\
7 & 7\\
8 & 11\\
\hline
\end{tabular}
\end{table}

\section{Some observations}\label{sec:observations}
It took a fair amount of floundering to come up with the induction argument used in \cref{lem:t(n)=3}. Along the way, we noticed a few things that seem worth mentioning.

\subsection{Bernoulli denominators}
There is a close connection between Bernoulli denominators and Toda primes. We will introduce this relationship by characterizing the Toda primes of primes greater than 5.

\begin{prop}\label{prop:t(p)}
Assume $p\geq 7$ is a prime. Let $\vphi$ denote the totient function. If $\vphi(x)=4p$ for some integer $x$, then $T(p)=\{3,5,2p+1\}$ or $\{3,5,4p+1\}$. Otherwise, $T(p)=\{3,5\}$.
\end{prop}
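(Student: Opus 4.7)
The plan is to enumerate candidate Toda primes of $p$ directly from the definition, bound the count via a congruence mod $3$, and then translate the resulting dichotomy into the equation $\vphi(x) = 4p$.

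First I would list the candidates. Since the divisors of $4p = 2^2 \cdot p$ are $\{1,2,4,p,2p,4p\}$, the condition $q - 1 \mid 4p$ forces an odd candidate $q \in \{3, 5, p+1, 2p+1, 4p+1\}$. For odd $p \geq 7$, the number $p+1$ is even and at least $8$, hence composite. By \cref{lem:not 5 or not 3 and 5} (applied with $\gcd(p,15) = 1$), both $3$ and $5$ belong to $T(p)$. For the remaining candidates, the cofactor $\tfrac{4p}{q-1}$ is $2$ when $q = 2p+1$ and $1$ when $q = 4p+1$, each trivially coprime to the odd $q$, so $2p+1 \in T(p)$ iff $2p+1$ is prime, and similarly for $4p+1$. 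A mod-$3$ argument then bounds $|T(p)|$ by $3$: for $p \geq 7$ prime we have $p \not\equiv 0 \pmod{3}$, so either $p \equiv 1 \pmod{3}$ (whence $3 \mid 2p+1$ and $2p+1 > 3$) or $p \equiv 2 \pmod{3}$ (whence $3 \mid 4p+1$), making one of the two candidates composite. Thus $T(p)$ is one of $\{3,5\}$, $\{3,5,2p+1\}$, or $\{3,5,4p+1\}$.

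It remains to identify which case occurs via the solvability of $\vphi(x) = 4p$. The easy direction is explicit: if $2p+1$ is prime, set $x = 4(2p+1)$, giving $\vphi(x) = 2 \cdot 2p = 4p$; if $4p+1$ is prime, take $x = 4p+1$. For the converse, suppose $\vphi(x) = 4p$. Any prime $\ell \mid x$ contributes $\ell - 1$ as a divisor of $\vphi(x) = 4p$, so $\ell - 1 \in \{1,2,4,p,2p,4p\}$; this excludes $\ell = p+1$ (not prime) and $\ell = p$ (since $p-1 \mid 4$ fails for $p \geq 7$), leaving $\ell \in \{2, 3, 5, 2p+1, 4p+1\}$. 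Moreover, if an odd prime $\ell$ divides $x$ to a power $\geq 2$, then $\ell \mid \vphi(x) = 4p$, forcing $\ell = p$, a contradiction. So $x = 2^a \cdot m$ with $m$ a square-free product of primes from $\{3, 5, 2p+1, 4p+1\}$. The $p$-adic valuation $v_p(\vphi(x)) = 1$ then forces exactly one of $2p+1, 4p+1$ to occur in $m$, and that number must therefore be prime.

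The main obstacle is the converse of the last step: reconstructing the possible shape of $x$ from $\vphi(x) = 4p$ alone. The key is combining the two constraints that every prime $\ell \mid x$ satisfies $\ell - 1 \mid 4p$ and that every prime power $\ell^a \| x$ with $a \geq 2$ has $\ell \mid 4p$. Together these pin down the multiplicative structure of $x$ tightly enough that separating out the $p$-adic valuation finishes the equivalence.
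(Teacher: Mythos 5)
Your proof is correct and follows essentially the same route as the paper's: enumerate the candidates $3,5,p+1,2p+1,4p+1$ from the divisors of $4p$, tie the solvability of $\vphi(x)=4p$ to the primality of $2p+1$ or $4p+1$ via the multiplicative structure of $x$, and use the mod-$3$ observation to rule out both being prime simultaneously. Your reconstruction of the shape of $x$ (square-free odd part, $v_p$ argument) is in fact a bit more explicit than the paper's appeal to Euler's product formula, but the substance is the same.
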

\begin{proof}
    One can directly check that $3,5\in T(p)$ for all primes greater than 5. Now by Euler's product formula, we have $\vphi(x)=p_1^{e_1-1}(p_1-1)\cdots p_m^{e_m-1}(p_m-1)$, where $x=\prod_{i=1}^m p_i^{e_i}$ is the prime factorization of $x$. It follows that there exists $x$ such that $\vphi(x)=4p$ if and only if one of the following cases holds:
    \begin{enumerate}[(i)]
    \item $x=2^2\cdot q$, where $q$ is an odd prime such that $q-1=2p$. In this case, $q$ is a Toda prime of $p$ with $\frac{4p}{q-1}=2$.
    \item $x=2^r\cdot 3\cdot q$, where $r\in\{0,1\}$ and $q$ is an odd prime such that $q-1=2p$. In this case, $q$ is a Toda prime of $p$ with $\frac{4p}{q-1}=2$.
    \item $x=2^r\cdot q$, where $r\in\{0,1\}$ and $q$ is an odd prime such that $q-1=4p$. In this case, $q$ is a Toda prime of $p$ with $\frac{4p}{q-1}=1$.
    \item $x=2^r\cdot 5^2$, where $r\in\{0,1\}$ (in which case $p=5$). This case is not relevant for this lemma, as we have assumed $p\geq 7$.
    \end{enumerate}
    It remains to show that no other primes can be the Toda prime of $p$. To this end, let $q>5$ be a Toda prime of $p$. Then $q-1\mid 4p$, so we either have $q-1=4p$ or $q-1=2p$ (as $q-1$ is even and $p$ is odd). The existence of such a $q$ gives us a solution to $\vphi(x)=4p$ as outlined in cases (i), (ii), and (iii).

    To conclude, we need to show that we cannot have two odd primes $q_1,q_2$ such that $q_1-1=2p$ and $q_2-1=4p$. In other words, we need to show that there is no prime $p\geq 7$ such that both $2p+1$ and $4p+1$ are prime. It in fact suffices to assume $p>3$ here: under this assumption, $p\not\equiv 0\mod 3$, which implies that either $2p+1$ or $4p+1$ is divisible by 3. In particular, at most two of $p,2p+1,4p+1$ can be prime when $p>3$.
\end{proof}

\begin{rem}
    The sequence of primes (greater than 5) with $t(p)=2$ and $t(p)=3$ can be found at \cite[\href{https://oeis.org/A043297}{A043297} and \href{https:\\oeis.org/A087634}{A087634}]{oeis}, respectively. In particular, for all primes other than $5$, we have $t(p)=2$ if and only if the denominator of the Bernoulli number $B_{4p}$ is 30, which we now prove.
\end{rem}

\begin{prop}\label{prop:bernoulli 30}
    Let $D_{2n}$ denote the denominator of the Bernoulli number $B_{2n}$. For all primes other than 5, we have $t(p)=2$ if and only if $D_{4p}=30$.
\end{prop}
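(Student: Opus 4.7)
The plan is to combine the von Staudt--Clausen theorem with \cref{prop:t(p)}. Recall that von Staudt--Clausen states
\[D_{2n} = \prod_{\substack{q\text{ prime}\\ (q-1)\mid 2n}} q.\]
I will apply this with $2n = 4p$ to pin down $D_{4p}$ in terms of which of $2p+1$ and $4p+1$ are prime, and then invoke \cref{prop:t(p)} to match this with $t(p) = 2$.

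First I handle the generic case $p \geq 7$. The divisors of $4p$ are $\{1, 2, 4, p, 2p, 4p\}$, so the candidate primes dividing $D_{4p}$ are $\{2, 3, 5, p+1, 2p+1, 4p+1\}$. Since $p \geq 7$ is odd, $p+1$ is even and at least $8$, hence not prime. So
\[D_{4p} = 2\cdot 3\cdot 5\cdot \prod_{q\in\{2p+1,4p+1\}\cap\mathrm{primes}} q,\]
which equals $30$ if and only if neither $2p+1$ nor $4p+1$ is prime. On the other hand, \cref{prop:t(p)} tells us that $T(p) = \{3,5\}$ (so $t(p) = 2$) precisely when $\vphi(x) = 4p$ has no solution, and from the case analysis in the proof of \cref{prop:t(p)} (cases (i), (ii), (iii), noting that case (iv) is excluded since $p\neq 5$) this happens exactly when neither $2p+1$ nor $4p+1$ is prime. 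The two conditions therefore coincide.

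It remains to verify the two small primes $p = 2$ and $p = 3$ by hand. For $p = 2$, the table shows $T(2) = \{3,5\}$, so $t(2) = 2$, and $B_8 = -\tfrac{1}{30}$ gives $D_8 = 30$; both sides hold. For $p = 3$, the table shows $T(3) = \{5,7,13\}$, so $t(3) = 3 \neq 2$, and $B_{12} = -\tfrac{691}{2730}$ gives $D_{12} = 2730 \neq 30$; both sides fail. (Alternatively, the same von Staudt--Clausen computation works: for $p=3$, the divisors of $12$ give candidate primes $\{2,3,5,7,13\}$, and both $2p+1 = 7$ and $4p+1 = 13$ are prime, so $D_{12} = 30 \cdot 7 \cdot 13$.)

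No step here is truly an obstacle, since all the real combinatorial work sits inside \cref{prop:t(p)}; the main thing to get right is the bookkeeping for the two small primes, and the observation that for $p \geq 7$ the divisor $q - 1 = p$ contributes no prime (ensuring the candidate set collapses cleanly to $\{2,3,5,2p+1,4p+1\}$).
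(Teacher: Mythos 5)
Your proposal is correct and follows essentially the same route as the paper: both reduce to $p\geq 7$ after checking $p=2,3$ by hand, apply von Staudt--Clausen to see that $D_{4p}=30$ exactly when neither $2p+1$ nor $4p+1$ is prime, and match this against \cref{prop:t(p)}. The only cosmetic difference is that you route the forward direction through the $\vphi(x)=4p$ case analysis, whereas the paper argues directly that $2p+1$ and $4p+1$ would be Toda primes of $p$ if they were prime; your explicit dismissal of the candidate $p+1$ is a small bookkeeping point the paper leaves implicit.
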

\begin{proof}
    We can check the claim directly for $p=2$ and $p=3$, so we may assume $p>5$. To begin, assume $t(p)=2$. Since $p>5$, we have $T(p)=\{3,5\}$ by \cref{prop:t(p)}. By the von Staudt--Clausen theorem, $D_{4p}$ is the product of all primes $q$ such that $q-1\mid 4p$. Clearly $q=2,3,5$ all satisfy this condition, so $30\mid D_{4p}$. The only other candidates for $q$ are $q=2p+1$ and $q=4p+1$. But $\gcd(2p+1,2)=\gcd(4p+1,1)=1$, so either of these options would be a Toda prime of $p$ if they were prime. Since $2p+1,4p+1\not\in T(p)$, it follows that $2p+1$ and $4p+1$ are not prime, and thus $D_{4p}=30$.

    Now assume that $D_{4p}=30$. Then the only primes $q$ such that $(q-1)\mid 4p$ are 2, 3, and 5, so $t(p)\leq 2$. It follows that $t(p)=2$, as desired.
\end{proof}

Other results analogous to \cref{prop:bernoulli 30} are more or less straightforward to prove on a case-by-case basis. For example:

\begin{prop}\label{prop:bernoulli}
    The following statements are true:
    \begin{enumerate}[(i)]
    \item If $D_{12m}=2730$, then $T(3m)=\{5,7,13\}$.
    \item If $D_{20m}=330$, then $T(5m)=\{3,11\}$.
    \item If $D_{60m}=56786730$, then $T(15m)=\{7,11,13,31,61\}$.
    \end{enumerate}
\end{prop}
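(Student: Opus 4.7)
The unifying tool is the von Staudt--Clausen theorem, which says that $D_{2n}$ is the product of all primes $q$ with $(q-1)\mid 2n$. Combined with the characterization (implicit in \cref{prop:bernoulli 30} and explicit in \cref{sec:heuristic}) that $T(n)$ consists of exactly the odd primes $q$ with $(q-1)\mid 4n$ and $q\nmid n$, this yields the following dictionary: $T(n)$ is the set of odd prime divisors of $D_{4n}$ that do not divide $n$.

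Each part of the proposition then reduces to three tasks: (a) factor the given value of $D_{4n}$; (b) observe that the odd prime divisors of $D_{4n}$ not appearing in the proposed $T(n)$ already divide $n$ (for instance, $3\mid 3m$ in (i), and $3,5\mid 15m$ in (iii)), so they are correctly excluded from $T(n)$ by \cref{lem:prime factors not toda}; and (c) show that no prime $q$ in the proposed $T(n)$ divides $n$. Step (c) is the only nontrivial point, and I plan to handle it by contradiction. If $q\mid n$, then since $q$ is coprime to the fixed factor of $n$ (namely $3$, $5$, or $15$), we have $q\mid m$, so the product of $q$ and the prescribed multiple of $4$ (namely $12$, $20$, or $60$) divides $4n$. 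A brief search among the divisors of this product then produces a prime $r$ with $(r-1)\mid 4n$ that does not appear in the prescribed factorization of $D_{4n}$, contradicting von Staudt--Clausen.

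Concretely, the witnesses $(q,r)$ are: for (i), $(5,11)$, $(7,29)$, $(13,53)$ via $10\mid 60$, $28\mid 84$, $52\mid 156$; for (ii), $(3,7)$, $(11,23)$ via $6\mid 60$, $22\mid 220$; and for (iii), $(7,29)$, $(11,23)$, $(13,53)$, $(31,311)$, $(61,367)$ via $28\mid 420$, $22\mid 660$, $52\mid 780$, $310\mid 1860$, $366\mid 3660$. The main obstacle, if one can call it that, is simply producing and verifying these witnesses (primality of $311$ and $367$ being the least routine checks) and confirming that each $r$ indeed falls outside the given factorization; beyond that, the argument is a mechanical application of von Staudt--Clausen.
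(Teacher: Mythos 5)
Your argument is correct, and it is essentially the approach the paper itself sketches (the paper omits the details but states the strategy of exhibiting, for each prime $q$ in the claimed Toda set with $q\mid m$, an extra shifted prime dividing $4n$ and hence the Bernoulli denominator; this is formalized in \cref{lem:strategy for denoms}). Your witnesses all check out, including the primality of $311$ and $367$.
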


We have omitted the proof of \cref{prop:bernoulli} for the sake of brevity. The strategy is to show that if $p$ is an odd prime factor of the Bernoulli denominator but not a Toda prime of $n$, then there are additional shifted primes dividing $4n$. This leads us to the following conjecture.

\begin{notn}\label{notn:bernoulli}
    Given an even integer $2m$, let $P(2m)$ denote the set of primes such that $p-1\mid 2m$, so that $D_{2m}=\prod_{p\in P(2m)}p$. Let $F(d)=\min\{2m>0:D_{2m}=d\}$ \cite{PW23}.
\end{notn}

\begin{conj}\label{conj:general bernoulli}
    Let $d$ be a Bernoulli denominator. 
    \begin{enumerate}[(i)]
    \item\label{conj:toda for bernoulli} If $F(d)=4a$ for some integer $a$, then $T(am)=T(a)$ whenever $D_{4am}=D_{4a}$. 
    \item\label{conj:bernoulli inequality} If $D_{4am}\neq D_{4a}=d$, then $t(am)\geq t(a)$.
    \end{enumerate}
\end{conj}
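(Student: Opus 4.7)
Both parts of \cref{conj:general bernoulli} reduce, via the identity
\[T(x) = P(4x) \setminus (\{2\} \cup \Omega(x)),\]
to understanding how $P(4am)$ and $\Omega(am)$ differ from $P(4a)$ and $\Omega(a)$; note that $P(4am) \supseteq P(4a)$ and $\Omega(am) \supseteq \Omega(a)$ always.

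For part (i), the hypothesis $D_{4am} = D_{4a}$ is equivalent to $P(4am) = P(4a)$, and the desired equality $T(am) = T(a)$ is then equivalent to $\Omega(m) \cap T(a) = \varnothing$. I would argue by contradiction: suppose $p \in \Omega(m) \cap T(a)$. Then $(p-1) \mid 4a$ and $p \nmid 4a$, so for every divisor $d' \mid 4a$ the product $d'p$ divides $4am$ but does not divide $4a$. The core step is then to show that at least one of the numbers $\{d'p+1 : d' \mid 4a\}$ is prime, since such a prime would lie in $P(4am) \setminus P(4a)$ and contradict the hypothesis. The assumption $F(d)=4a$ enters here to pin $a$ down tightly enough for this prime-existence claim to be tractable.

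For part (ii), combining \cref{lem:subset} with \cref{lem:prime factors not toda} yields
\[t(am) \geq t(a) - |T(a) \cap \Omega(m)| + |T(am) \setminus T(a)|,\]
so $t(am) \geq t(a)$ reduces to producing at least one new Toda prime of $am$ for each $p \in T(a) \cap \Omega(m)$. For each such $p$, the construction from part (i) furnishes a family of candidate new primes $q = d' p + 1$ with $d' \mid 4a$; the hypothesis $D_{4am} \neq D_{4a}$ guarantees that at least one of these genuinely lies in $P(4am) \setminus P(4a)$. I would induct on $|T(a) \cap \Omega(m)|$ to produce a matching $p \mapsto q_p$, verifying along the way that $q_p \nmid am$ so that $q_p \in T(am)$.

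The principal obstacle in both parts is the same: the existence (and, in part (ii), the abundance) of primes in the family $\{d'p + 1 : d' \mid 4a\}$. This is a Dirichlet/Bateman--Horn-style problem that does not appear to yield to elementary methods in full generality, which is presumably why the authors state it as a conjecture. A realistic path forward combines direct verification for the small $a$ allowed by $F(d) = 4a$ with a sieve-theoretic existence argument for the remaining range.
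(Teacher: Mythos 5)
This statement is a conjecture in the paper; the paper gives no proof of it, so the only thing to measure your proposal against is the partial strategy the paper itself records. Your reduction of part (i) is that strategy: the identity $T(x)=P(4x)\setminus(\{2\}\cup\Omega(x))$ shows that under $P(4am)=P(4a)$ one has $T(am)=T(a)\setminus\Omega(m)$, so the whole content is ruling out $p\in T(a)\cap\Omega(m)$, and your candidate family $\{d'p+1:d'\mid 4a\}$ is exactly the family $\{2pi+1:i\mid 2a\}$ of \cref{lem:strategy for denoms} (your odd $d'$ contribute nothing, since $d'p+1$ is then even). So part (i) of your proposal is a correct conditional reduction, but the condition --- that this family contains a prime for each $p\in T(a)$ --- is precisely the open prime-existence problem, and the hypothesis $F(d)=4a$ does not make it finite or ``tractable'': $d$ ranges over all Bernoulli denominators, so $a$ is unbounded, and the paper explicitly says it does not know how to bootstrap the case-by-case argument (which yields \cref{prop:bernoulli}) to general $a$. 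In short, for (i) you have reproduced the paper's own unproved reduction, not a proof.

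For part (ii) the gap is larger than you acknowledge. Your inequality $t(am)\geq t(a)-|T(a)\cap\Omega(m)|+|T(am)\setminus T(a)|$ is correct (it follows from \cref{lem:subset} together with \cref{lem:prime factors not toda}), but the hypothesis $D_{4am}\neq D_{4a}$ guarantees only \emph{one} prime in $P(4am)\setminus P(4a)$, whereas you need $|T(am)\setminus T(a)|\geq|T(a)\cap\Omega(m)|$, i.e.\ one new prime per lost prime; nothing in your sketch produces the required injection $p\mapsto q_p$, and distinct $p$ could in principle yield the same candidate $q$. Moreover a prime $q\in P(4am)\setminus P(4a)$ need not lie in $T(am)$ at all: if $q\mid am$ it contributes to $D_{4am}$ but is excluded from $T(am)$ by \cref{lem:prime factors not toda}, and you defer this verification rather than supply it. The only instance of the conjecture the paper actually establishes is $a=3$ (conditionally, via \cref{prop:divisible by 3}), by a bespoke induction on $\omega$ that bears no resemblance to your matching argument. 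Your proposal is a reasonable articulation of why the statement is plausible and why it is hard, but both parts bottom out in Dirichlet/Bateman--Horn-type existence claims that neither you nor the paper can prove.
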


The inequality in \cref{conj:general bernoulli} \eqref{conj:bernoulli inequality}, if true, is tight. For example, $T(55)=\{3,23\}$ (so $t(55)=t(5)$), while $D_{220}=7590\neq 330=D_{20}$. Note that our proof of \cref{prop:divisible by 3} conditionally proves \cref{conj:general bernoulli} for $a=3$. Proving \cref{conj:general bernoulli} \eqref{conj:bernoulli inequality} for $a=3$ and $a=5$ would resolve \cref{conj:at least two}. 

\subsection{Germane primes and two wavefronts}
\cref{prop:bernoulli 30} and \cref{prop:bernoulli} are special cases of \cref{conj:general bernoulli} \eqref{conj:toda for bernoulli}. It is clear that the same procedure should yield a proof of \cref{conj:general bernoulli} \eqref{conj:toda for bernoulli} for any chosen Bernoulli denominator, but it is unclear how to bootstrap the proof to the general case. Our best guess is as follows. One would need to show that if $p\mid m$ for $p\in T(a)-\{3,5\}$, then there exists an odd prime $q\not\in T(a)$ such that $q-1\mid 4am$. To prove \cref{prop:bernoulli}, one can show that there exists an odd prime $q\not\in T(a)\cup\{3,5\}$ such that $q-1\mid 4pa$. It therefore suffices to show that at least one element of $\{2pi+1:i\mid 2a\}$ is prime. We summarize this approach in the following lemma.

\begin{lem}\label{lem:strategy for denoms}
    Let $d$ be a Bernoulli denominator with $F(d)=4a$ for some integer $a$. If $\{2pi+1:i\mid 2a\}$ contains a prime number for each $p\in T(a)$, then \cref{conj:general bernoulli} \eqref{conj:toda for bernoulli} holds for this Bernoulli denominator.
\end{lem}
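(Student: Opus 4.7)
The plan is to convert the Bernoulli denominator equality $D_{4am}=D_{4a}$ into the set equality $P(4am)=P(4a)$ using \cref{notn:bernoulli}, and then to prove both inclusions $T(am)\subseteq T(a)$ and $T(a)\subseteq T(am)$ using the combinatorial description of Toda sets. As a preliminary, I would note that for any positive integer $n$, $T(n)$ is exactly the set of odd primes $p$ with $p-1\mid 4n$ and $p\nmid n$: once $p-1\mid 4n$, the coprimality $\gcd(p,4n/(p-1))=1$ reduces to $p\nmid n$, since $p$ is odd and does not divide $p-1$.

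With this reformulation, the inclusion $T(am)\subseteq T(a)$ is essentially automatic: if $q\in T(am)$, then $q-1\mid 4am$ places $q$ in $P(4am)=P(4a)$, so $q-1\mid 4a$, and $q\nmid am$ forces $q\nmid a$. Hence $q\in T(a)$.

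The substantive direction is $T(a)\subseteq T(am)$, and this is where the hypothesis of the lemma enters. I would argue by contradiction: suppose $p\in T(a)$ but $p\notin T(am)$. Since $p-1\mid 4a\mid 4am$, the only possible obstruction is $p\mid am$, and since $p\nmid a$ (because $p\in T(a)$), this forces $p\mid m$. Now invoke the hypothesis to choose a prime $q=2pi+1$ with $i\mid 2a$. Then $q-1=2pi$ divides $4pa$, which in turn divides $4am$ because $p\mid m$, so $q\in P(4am)$. Conversely, if $q-1\mid 4a$, then $pi\mid 2a$, so $p\mid 2a/i\mid 2a$; since $p$ is odd and $p\nmid a$, this is impossible. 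Therefore $q\in P(4am)\setminus P(4a)$, contradicting $P(4am)=P(4a)$.

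The only mildly delicate step is ruling out $q-1\mid 4a$, but it reduces to elementary divisor arithmetic once one uses $p\nmid a$. So there is really no serious obstacle here; the content of the lemma is simply that the hypothesis manufactures, for each Toda prime $p$ of $a$ that happens to divide $m$, a witness prime $q\in P(4am)\setminus P(4a)$ that obstructs the denominator equality $D_{4am}=D_{4a}$.
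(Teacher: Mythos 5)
Your proof is correct and follows essentially the same route as the paper: reduce membership of $p\in T(a)$ in $T(am)$ to the condition $p\nmid m$, and when $p\mid m$ use the hypothesized prime $q=2pi+1$ as a witness in $P(4am)\setminus P(4a)$, contradicting $D_{4am}=D_{4a}$. You additionally spell out the easy inclusion $T(am)\subseteq T(a)$, which the paper leaves implicit; otherwise the arguments coincide.
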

\begin{proof}
    We know that $p-1\mid 4a$ with $\gcd(\frac{4a}{p-1},p)=1$ for all $p\in T(a)$. Thus $p-1\mid 4am$, and we have $\gcd(\frac{4am}{p-1},p)=1$ if and only if $p\nmid m$. It therefore suffices to show that if $p\mid m$ for some $p\in T(a)$, then $D_{4am}>D_{4a}$.
    
    Suppose $p\mid m$. If $p\in T(a)$, then $p\nmid a$, and hence any prime of the form $q=2pi+1$ (with $i\mid 2a$) cannot satisfy $q-1\mid 4a$. In particular, $q\not\in T(a)$. So if $\{2pi+1:i\mid 2a\}$ contains a prime number $q$, then $q-1\mid 4am$ and $q-1\nmid 4a$. Thus $D_{4am}>D_{4a}$, as desired. 
\end{proof}

A na\"ive guess is that $\{2pi+1:i\mid 2a\}$ contains a prime of the form $p(q-1)+1$, where $q\in\Omega(d)$. This led us to plot the first 1000 odd primes against the proportion of primes among $\{p(q-1)+1\}$, where $q$ ranges among the first 10000 odd primes (see \cref{fig:ratios}). There appear to be two families or wavefronts of primes in this plot, which we cannot explain. In this range, the Sophie Germain primes all belong to the lower wavefront.

\begin{defn}
    A prime is called \emph{germane} if it is of the form $p(q-1)+1$, where $p$ and $q$ are both prime. We will call $p$ and $q$ the \emph{width} and \emph{length}, respectively, of $p(q-1)+1$. For example, every Sophie Germain prime is the width of a length 3 germane prime, while 3 is the only germane prime of length 2. We will also say that a germane prime of width $p$ is \emph{germane to $p$}.
\end{defn}

\begin{ques}
    Given an odd prime $p$, let $r_p(n)$ denote the ratio of width $p$ germane primes among $\{p(q-1)+1\}$, where $q$ ranges among the first $n$ odd primes. Does the double wavefront pattern in \cref{fig:ratios} persist in the limit
    \[\lim_{n\to\infty}\{(p,r_p(n)):p\text{ prime}\}?\]
    Is there a qualitative description (beyond the frequency of primes germane to $p$) of the primes falling into each of these two families?
\end{ques}

\begin{ques}
    In \cref{fig:germane}, we plot all germane primes of width $p$ (horizontal axis) and length $q$ (vertical axis), where $p$ and $q$ range among the first 1000 primes. There are vertical lines indicating primes that are the width of very few germane primes, and horizontal lines indicating primes that are rarely the length of a germane prime. Can any of these lines be explained?
\end{ques}

\begin{ques}
Given a prime $r$, let $w(r)=\#\{p\text{ prime}:r\text{ germane to }p\}$. Given an integer $n\geq 0$, what is the density of the level set
\[\{r\text{ prime}:w(r)=n\}?\]
It appears that the drop-off of these densities is quite stark, with primes satisfying $w(r)=6$ apparently only occurring every 80000 primes or so.
\end{ques}

\appendix
\section{Code}\label{sec:code}
Here is a basic Sage program for computing $T(n)$ and $t(n)$, along with computations of $t(n)$ for $\omega(n)=3$ and $\omega(n)=4$ when $n$ is divisible by $3p$ for $p\in\{5,7,13\}$.

\begin{lstlisting}
# Computing the set of Toda primes
def T(n):
    S = [];
    for i in divisors(2*n):
        d = 2*i;
        if is_prime(d+1) and gcd(d+1,4*n/d) == 1:
            S.append(d+1);
    return(S)

# omega(n) = 3 case
print('p','q','t(3pq)')
for p in [5,7,13]:
    for q in T(3*p):
        print(p,q,len(T(3*p*q)))

# omega(n) = 4 case
# p,q in {5,7,13}, r in T(3pq)
print('p','q','r','t(3pqr)')
for Q in Combinations([5,7,13],2):
    for r in T(3*prod(Q)):
        n = 3*prod(Q)*r;
        print(Q[0],Q[1],r,len(T(n)))
        
# p in {5,7,13}, q,r in T(3p)        
print('p','q','r','t(3pqr)')
for p in [5,7,13]:
    for Q in Combinations(T(3*p),2):
        n = 3*p*prod(Q);
        print(p,Q[0],Q[1],len(T(n)))

# p in {5,7,13}, q in T(3p), r in T(3pq)
print('p','q','r','t(3pqr)')
for p in [5,7,13]:
    for q in T(3*p):
        for r in T(3*p*q):
            n = 3*p*q*r;
            print(p,q,r,len(T(n)))
\end{lstlisting}

Here is code for calculating the minimal $t(n)$ over all $n$ satisfying the assumptions of \cref{conj:main} and with a given $\omega(n)$. We used ChatGPT-5.1 to define the recursive functions \verb|select_toda_primes| and \verb|select_recursive|, so these could certainly be improved. 

The search space of $n$ grows very rapidly as $\omega(n)$ increases. It took our machine about 10 hours to calculate \verb|minimal(8)|. One could significantly cull the search space for \verb|minimal(omega)| by taking those $n$ among the search space for \verb|minimal(omega-k)| satisfying \verb|len(T(n)) < minimal(omega-k)+k| and then selecting $k$ more prime factors from among the relevant sets of Toda primes, since each additional prime factor can decrease $t(n)$ by at most 1.

\begin{lstlisting}
# Computing the set of Toda primes
def T(n):
    S = [];
    for i in divisors(2*n):
        d = 2*i;
        if is_prime(d+1) and gcd(d+1,4*n/d) == 1:
            S.append(d+1);
    return(S)

def select_toda_primes(m,n):
    results = []
    select_recursive(m, n, [], results)
    return(results)

def select_recursive(m_needed,current_n,collected,\
results):
    # If done, record the result
    if m_needed == 0:
        results.append(collected.copy())
        return

    current_T = T(current_n)

    # Take k primes from T(current_n)
    for k in range(1,min(len(current_T), m_needed)+1):

        # Choose all combinations of size k
        for combo in Combinations(current_T, k):

            new_collected = collected + list(combo)

            # Multiply n by product of chosen primes
            prod = 1
            for p in combo:
                prod *= p
            next_n = current_n * prod

            # Recurse for remaining primes
            select_recursive(m_needed-k, next_n,\
            new_collected, results)
    
# Check if list of primes satisfies criteria
# of conjecture
def satisfies_criteria(n,p):
    basic_primes = list(set([5,7,13])-set([p]))
    todas = list(set(T(3*p))-set([5,7,13]))
    if all([n % q != 0 for q in basic_primes])\
    and any([n % q != 0 for q in todas]):
        return(True)
    else: return(False)

# Calculate min t(n)
def minimal(omega):
    min = 10^10
    for p in [5,7,13]:
        for primes in select_toda_primes(omega-2,3*p):
            n = 3*p*prod(primes)
            if satisfies_criteria(n,p)\
            and len(T(n)) < min:
                min = len(T(n))
    return(min)
\end{lstlisting}

\printbibliography

\begin{figure}[p]
    \includegraphics[width=.75\linewidth]{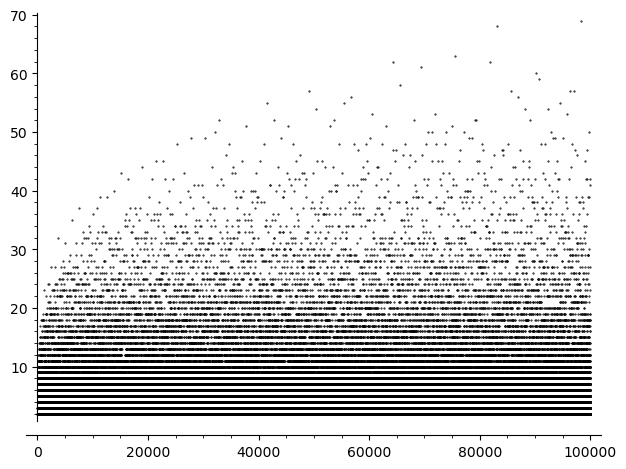}
    \caption{$t(n)$ for $n\leq 100000$}\label{fig:tn}
\end{figure}

\begin{figure}[p]
    \includegraphics[width=.75\linewidth]{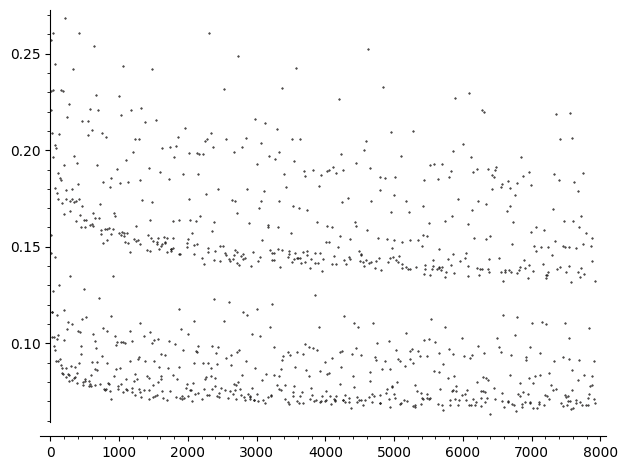}
    \caption{Ratio of primes germane to $p$}\label{fig:ratios}
\end{figure}

\begin{figure}[p]
    \includegraphics[width=.75\linewidth]{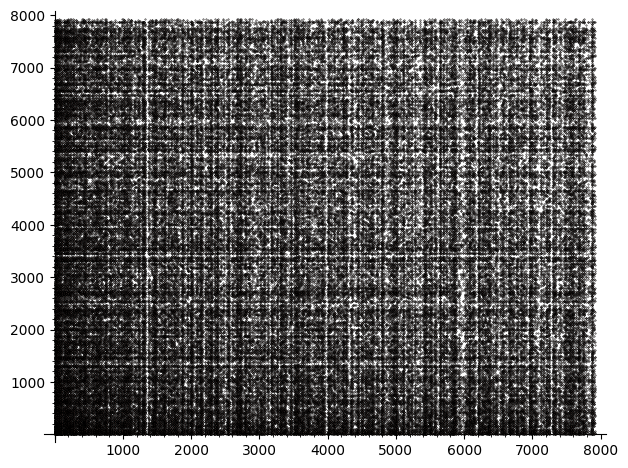}
    \caption{Width versus length of germane primes}\label{fig:germane}
\end{figure}

\end{document}